\newtheorem{thm}{Theorem}[section]
\newtheorem{lem}[thm]{Lemma}
\newtheorem{rem}[thm]{Remark}
\newtheorem{pro}[thm]{Proposition}
\newtheorem{cor}[thm]{Corollary}
\numberwithin{equation}{section}
\begin{document}

\title{\textbf{Mertens' theorem and prime number theorem for Selberg class}}
\author{Yoshikatsu Yashiro\\ 
\small Graduate School of Mathematics, Nagoya University,\\[-4.8pt] 
\small 464-8602 \ Chikusa-ku, Nagoya, Japan \\[-4.8pt] 
\small E-mail: m09050b@math.nagoya-u.ac.jp
}
\date{}
\maketitle

\renewcommand{\thefootnote}{}
\footnote{2010 \emph{Mathematics Subject Classification}: Primary 11M41; Secondary 11N05.}
\footnote{\emph{Key words and phrases}: Selberg class, Mertens' theorem, prime number theorem.}

\quad\\[-60pt]

\begin{abstract}
In 1874, Mertens proved the approximate formula for partial Euler product for Riemann zeta function at $s=1$, which is called Mertens' theorem. 
In this paper, we shall generalize Mertens' theorem for Selberg class and show the prime number theorem for Selberg class.  
\end{abstract}

\makeatletter
\def\Res{\mathop{\operator@font Res}}
\makeatother

\section{Introduction}

In 1874, Mertens \cite{MER} proved the following theorem:
\begin{align*}
\prod_{p\leq x}\left(1-\frac{1}{p}\right)^{-1}=e^\gamma\log x+O(1),
\end{align*}
where $x\in\mathbb{R}_{\geq2}$ and $\gamma$ is Euler's constant. The above formula is the approximate formula of the finite Euler product for the Riemann $\zeta$-function $\zeta(s)$ at $s=1$, which is called Mertens' (3rd) theorem. 
Later, in 1999 Rosen \cite{ROS} generalized Mertens' theorem for Dedekind $\zeta$-function $\zeta_K(s)$:
\begin{align*}
\prod_{N\mathfrak{p}\leq x}\left(1-\frac{1}{N\mathfrak{p}}\right)^{-1}=\alpha_Ke^{\gamma}\log x+O(1),
\end{align*}
where $K$ is an algebraic number field, and $\alpha_K$ is the residue of $\zeta_K(s)$ at $s=1$. This theorem is obtained by using   
the following approximate formula:
\begin{align}
\sum_{N\mathfrak{p}\leq x}\log(N\mathfrak{p})=x+O(xe^{-c_K\sqrt{\log x}}) \label{CCC}
\end{align}
which was proved by Landau \cite{LAN}. Note that (\ref{CCC}) is equivalent to the prime number theorem for the algebraic number field $K$, where $c_K$ is a positive constant depending on $K$.

In this paper, we consider Mertens' theorem for Dirichlet series introduced by Selberg \cite{SE2}.  Selberg class $\mathcal{S}$ is defined by the class of Dirichlet series satisfying the following conditions:
\begin{enumerate}[(a)]
\item (Absolute convergence) \label{CV} The series $F(s)=\sum_{n=1}^\infty a_F(n)n^{-s}$ is absolutely convergent for $\text{Re }s>1$.
\item (Analytic continuation) \label{AC} There exists  $m\in\mathbb{Z}_{\geq 0}$ such that $(s-1)^mF(s)$ is an entire function of finite order.
\item (Functional equation) \label{FE} The function $F(s)$ satisfies $\Phi(s)=\omega\overline{\Phi(1-\overline{s})}$ where 
$\Phi(s)=Q^s\prod_{j=1}^r\Gamma(\lambda_js+\mu_j)F(s)$, $r\in\mathbb{Z}_{\geq1}$, $Q\in\mathbb{R}_{>0}$, $\lambda_j\in\mathbb{R}_{>0}$, $\text{Re }\mu_j\in\mathbb{R}_{>0}$, and $\omega\in\mathbb{C}$ satisfying $|\omega|=1$.
\item (Ramanujan conjecture) \label{RC} For any fixed $\varepsilon\in\mathbb{R}_{>0}$, $a_F(n)=O(n^\varepsilon)$.
\item \label{LG} The logarithmic function of $F(s)$ is given by $\log F(s)=\sum_{n=1}^\infty b_F(n)n^{-s}$,   where $b_F(n)=0$ when $n\ne p^r \ (r\in\mathbb{Z}_{\geq1})$, and there exists $\theta\in\mathbb{R}_{<1/2}$ such that $b_F(n)=O(n^\theta)$. 
\end{enumerate}
Moreover, the extended Selberg class $\mathcal{S}^{\#}$ is defined by a class of Dirichlet series satisftying only the conditions (\ref{CV})--(\ref{FE}). Clearly $\mathcal{S}\subset \mathcal{S}^{\#}$. For example $\zeta(s)$ belongs to $\mathcal{S}$ and $\zeta_K(s)$ belong to $\mathcal{S}^{\#}$.
The function $\zeta(s)$ and $\zeta_K(s)$ have the Euler products and zero-free regions. It is expected that $F\in\mathcal{S}^{\#}$ satisfies the following conditions:
\begin{enumerate}[(I)]
\item (Euler product) \label{AEP} There exists  a positive integer $k$ (depending on $F$) such that 
\begin{align}
F(s)=\prod_{p}\sum_{r=0}^\infty\frac{a_F(p^r)}{p^{rs}}=\prod_{p}\prod_{j=1}^k\left(1-\frac{\alpha_j(p)}{p^s}\right)^{-1} \label{EP}
\end{align}
with $|\alpha_j(p)|\leq 1$ for $\text{Re }s>1$. 
\item (Zero-free region) \label{AZF} There exists a positive constant $c_F$ (depending on $F$) such that $F(s)$ has no zeros in the region $$\text{Re }s\geq 1-\frac{c_F}{\log(|\text{Im }s|+2)},$$ except $s=1$ (if $F(s)$ has zero at $s=1$) and the Siegel zeros of $F(s)$.
\end{enumerate}
\begin{rem}\label{NEP}
If $F\in \mathcal{S}^{\#}$ satisfies (\ref{AEP}), then (\ref{LG}) are satisfied with $b_F(p^r)=(\alpha_1(p)^r+\cdots+\alpha_k(p)^r)/r$ and $\theta=0$, where the constant of $O$-term depends on $k$.
\end{rem}
By the same discussion as in the proof of the zero-free region of $\zeta(s)$ (see Montgomery and Vaughan \cite[Lemma 6.5 and Theorem 6.6]{M&V}), the following fact is obtained:
\begin{itemize}
\setlength{\itemsep}{24pt}
\item[]\label{NZF} If $F\in \mathcal{S}^{\#}$ satisfies (\ref{LG}) with ${\rm Re \ }b_F(n)\geq 0$ and has a zero or a simple pole at $s=1$, then $F$ satisfies (\ref{AZF}). 
\end{itemize}

Applying Motohashi's method \cite[Chapter 1.5]{MOT}, we can extend Mertens' theorem for Selberg class by using Perron's formula and complex analysis. 
\begin{thm}[Mertens' 3rd theorem for Selberg class]\label{MTH1}
Let $F\in\mathcal{S}^{\#}$ and suppose the condition (\ref{AEP}) and (\ref{AZF}). Then we have
\begin{align}
\prod_{p\leq x}\prod_{j=1}^k\left(1-\frac{\alpha_j(p)}{p}\right)^{-1}=c_{-m}e^{\gamma m}(\log x)^{m}(1+O(e^{-C_F\sqrt{\log x}})), \label{MT1}
\end{align}
where $m$ denotes the order $m$ of pole for $F(s)$ at $s=1$ when $m\in\mathbb{Z}_{>0}$, and the order $-m$ of zero for $F(s)$ at $s=1$ when $m\in\mathbb{Z}_{\leq 0}$. Moreover $c_{-m}$ is given by 
$c_{-m}=\lim_{s\to1}(s-1)^mF(s)$, and $C_F$ is a positive constant smaller than $c_F$ of the condition (\ref{AZF}). 
\end{thm}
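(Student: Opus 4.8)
The plan is to pass to logarithms, reduce the claim to an asymptotic formula for $\sum_{n\le x}b_F(n)/n$, evaluate that sum by a prime number theorem for $F$ together with two partial summations, and pin down the additive constant by comparing two evaluations of $\log F(s)$ as $s\to 1^{+}$. First, taking logarithms and using Remark~\ref{NEP} (which gives $-\sum_{j=1}^{k}\log(1-\alpha_j(p)/p)=\sum_{r\ge1}b_F(p^{r})/p^{r}$ with $|b_F(p^{r})|\le k$), one obtains
\[
\log\prod_{p\le x}\prod_{j=1}^{k}\Bigl(1-\frac{\alpha_j(p)}{p}\Bigr)^{-1}
=\sum_{p\le x}\sum_{r\ge1}\frac{b_F(p^{r})}{p^{r}}
=\sum_{n\le x}\frac{b_F(n)}{n}+R(x),
\]
with $R(x)=\sum_{p\le x}\sum_{r\ge2,\ p^{r}>x}b_F(p^{r})/p^{r}$; splitting the $p$-range at $\sqrt{x}$ gives $R(x)\ll x^{-1/2}$, which is negligible against $e^{-C_F\sqrt{\log x}}$. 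Thus it suffices to show $\sum_{n\le x}b_F(n)/n=m\log\log x+\log c_{-m}+\gamma m+O(e^{-C_F\sqrt{\log x}})$ and exponentiate.

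The heart of the matter is a prime number theorem for $F$. Put $\Lambda_F(n)=b_F(n)\log n$, so that $-F'/F(s)=\sum_{n}\Lambda_F(n)n^{-s}$ is absolutely convergent for $\mathrm{Re}\,s>1$ by (\ref{RC}) and (\ref{AEP}); I would prove $\psi_F(x):=\sum_{n\le x}\Lambda_F(n)=mx+O\!\bigl(xe^{-c\sqrt{\log x}}\bigr)$ for some $c<c_F$. This is the standard Perron-and-contour argument: apply truncated Perron at $\mathrm{Re}\,s=1+1/\log x$, then shift the contour to $\mathrm{Re}\,s=1-c/\log T$, which lies inside the zero-free region (\ref{AZF}); the pole of $-F'/F$ at $s=1$ contributes residue $m$ because $F(s)\sim c_{-m}(s-1)^{-m}$, yielding the main term $mx$, while a possible Siegel zero $\beta_{0}<1$ contributes only $\ll x^{\beta_0}=o\!\bigl(xe^{-C\sqrt{\log x}}\bigr)$ for every fixed $C>0$ (with an implied constant depending on $F$) and so is absorbed. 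The bounds on $F'/F$ along the shifted contour come from the Hadamard factorization of the entire function $(s-1)^{m}F(s)$ of finite order (condition (\ref{AC})) and the polynomial growth of $F$ on vertical lines, which follows from the functional equation (\ref{FE}), Stirling's formula and Phragm\'en--Lindel\"of; taking $\log T\asymp\sqrt{\log x}$ then gives the claimed saving. I expect this to be the main obstacle, since it is precisely here that the structural axioms (\ref{AC}), (\ref{FE}), (\ref{RC}), (\ref{AZF}) are used.

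Two partial summations now give $\sum_{n\le x}\Lambda_F(n)/n=m\log x+E_1+O(e^{-c'\sqrt{\log x}})$ and then $\sum_{n\le x}b_F(n)/n=\sum_{2\le n\le x}\Lambda_F(n)/(n\log n)=m\log\log x+E_2+O(e^{-C_F\sqrt{\log x}})$ for constants $E_1,E_2$ depending on $F$; the boundary terms arising from integration by parts are exactly what cancel the slowly decaying $1/\log x$ contributions. To identify $E_2$ I would evaluate $\log F(1+\delta)$ in two ways as $\delta\to0^{+}$. On one hand, Abel summation from the previous estimate gives
\[
\log F(1+\delta)=\delta\int_{2}^{\infty}\Bigl(\sum_{n\le t}\frac{b_F(n)}{n}\Bigr)t^{-1-\delta}\,dt=m\log\frac{1}{\delta}-\gamma m+E_2+o(1),
\]
the term $-\gamma m$ coming from $\int_{0}^{\infty}(\log u)e^{-u}\,du=\Gamma'(1)=-\gamma$. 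On the other hand, since $(s-1)^{m}F(s)$ is entire with value $c_{-m}\neq0$ at $s=1$, one has $\log F(1+\delta)=\log c_{-m}+m\log(1/\delta)+O(\delta)$ (for the branch of $\log$ continuously attached along $\delta\mapsto F(1+\delta)\delta^{m}$). Comparing the two forces $E_2=\log c_{-m}+\gamma m$.

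Exponentiating the estimate for $\sum_{n\le x}b_F(n)/n$ and restoring $R(x)$ yields
\[
\prod_{p\le x}\prod_{j=1}^{k}\Bigl(1-\frac{\alpha_j(p)}{p}\Bigr)^{-1}=c_{-m}e^{\gamma m}(\log x)^{m}\bigl(1+O(e^{-C_F\sqrt{\log x}})\bigr),
\]
where the requirement $C_F<c_F$ is exactly the room needed to keep the contour strictly inside (\ref{AZF}); for $x$ in any bounded interval both sides are bounded and bounded away from $0$, so that range is absorbed into the implied constant. Alternatively, in the spirit of Motohashi's method one can reach $\sum_{n\le x}b_F(n)/n$ in one step from the Perron integral $\frac{1}{2\pi i}\int\log F(s+1)\,x^{s}s^{-1}\,ds$, deforming past the branch point of $\log F$ at $s=0$: writing $\log F(s+1)=\log c_{-m}-m\log s+h(s)$ with $h$ holomorphic near $0$ and $h(0)=0$, the main term $m\log\log x+\gamma m+\log c_{-m}$ comes from the residue of $\log c_{-m}\cdot x^{s}s^{-1}$ and the classical identity $\frac{1}{2\pi i}\int_{(c)}(-\log s)\,x^{s}s^{-1}\,ds=\log\log x+\gamma$, the rest of the contour contributing the error $O(e^{-C_F\sqrt{\log x}})$.
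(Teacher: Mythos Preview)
Your argument is correct, but your primary route is not the paper's. You first establish the prime number theorem $\psi_F(x)=\sum_{n\le x}\Lambda_F(n)=mx+O(xe^{-c\sqrt{\log x}})$ by Perron applied to $-F'/F$, then descend to $\sum_{n\le x}b_F(n)/n$ by two partial summations, and finally pin down the additive constant by the Tauberian comparison of $\log F(1+\delta)$ as $\delta\to 0^{+}$. The paper instead applies Perron \emph{directly} to $\log F(1+s)$ (so the relevant Dirichlet series is $\sum b_F(n)n^{-s-1}$), shifts the contour to $\mathrm{Re}\,s=-b'$ with $b'=c_F/\sqrt{\log x}$ using the zero-free region, and isolates the logarithmic singularity at $s=0$ by integrating over a small circle $C$ of radius $b'$; on $C$ one writes $\log F(1+s)=-m\log s+\log c_{-m}+(\text{holomorphic vanishing at }0)$ and evaluates $\frac{1}{2\pi i}\int_C (x^s/s)\log s\,ds$ by expanding $x^s$ termwise, which produces $\log\log x+\gamma$ via the identity $\int_0^1\frac{1-e^{-u}}{u}\,du-\int_1^\infty\frac{e^{-u}}{u}\,du=\gamma$. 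This is precisely the ``Motohashi method'' alternative you sketch in your last paragraph; in the paper it is the main argument, not an afterthought.

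The trade-offs: the paper's route reaches the constant $m\gamma+\log c_{-m}$ in one contour computation and needs only bounds on $\log F$ (supplied by the Borel--Carath\'eodory type Lemma~\ref{FZE}) rather than on $F'/F$; it also respects the paper's logical order, since Theorem~\ref{MTH2} (your PNT input) is there \emph{deduced from} Theorem~\ref{MTH1} via Propositions~\ref{M2M} and~\ref{M1M}, so importing PNT as a lemma inverts that flow. Your route is more modular---the analytic work is concentrated in a standard explicit-formula PNT, after which everything is elementary Abel summation---and your identification of $E_2$ by matching two expansions of $\log F(1+\delta)$ is a clean substitute for the circle integral. Both approaches lose the same amount in the exponent (hence $C_F<c_F$), and your handling of a possible Siegel zero (fixed $\beta_0<1$ for fixed $F$, hence $x^{\beta_0}=o(xe^{-C\sqrt{\log x}})$) matches the paper's remark that for $x$ large the strip $\mathrm{Re}\,s\ge 1-b'$ contains no Siegel zero.
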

Indeed instead of the condition (\ref{AZF}), we can prove the following weaker formula than (\ref{MT1}):
\begin{align}
\prod_{p\leq x}\prod_{j=1}^k\left(1-\frac{\alpha_j(p)}{p}\right)^{-1}=c_{-m}e^{\gamma m}(\log x)^{m}\left(1+O\left(\frac{1}{\log x}\right)\right), \label{MT2}
\end{align}
under the assumption of the prime number theorem for Selberg class. In order to improve the error term in (\ref{MT2}), it is necessary to assume (\ref{AZF}).

It is well-known that the prime number theorem is equivalence to $\zeta(1+it)\ne0 \ (t\in\mathbb{R})$. Kaczorowski and Perelli \cite{KP1} proved the equivalence of prime number theorem in Selberg class: 
\begin{align}
F(1+it)\ne0 \ (t\in\mathbb{R}) \ \Longleftrightarrow \ \sum_{n\leq x}b_F(n)\log n=mx+o(x) \label{PNT1}
\end{align}
where $F\in\mathcal{S}$. If we apply (\ref{MT1}), we can improve the above $o(x)$:
\begin{thm}[Prime number theorem for Selberg class]\label{MTH2}
Let $F\in\mathcal{S}^{\#}$ and suppose the conditions (\ref{AEP}) and (\ref{AZF}). Then we have
\begin{align}
\sum_{n\leq x}b_F(n)\log n=mx+O(xe^{-C_F''\sqrt{\log x}}) \label{PNT2}
\end{align}
where $C_F''$ is a positive constant smaller than $C_F$ in Theorem \ref{MTH1}. 
\end{thm}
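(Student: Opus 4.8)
The plan is to deduce \eqref{PNT2} from Theorem~\ref{MTH1} by taking logarithms in \eqref{MT1} and then applying partial summation. By Remark~\ref{NEP} the coefficients $b_F(p^r)=(\alpha_1(p)^r+\cdots+\alpha_k(p)^r)/r$ satisfy $\sum_{j=1}^k\alpha_j(p)^r=rb_F(p^r)$, and since $|\alpha_j(p)/p|\le 1/p<1$ we may expand $-\log(1-\alpha_j(p)/p)=\sum_{r\ge1}\alpha_j(p)^r/(rp^r)$; summing over $j$ gives $\sum_{j=1}^k\{-\log(1-\alpha_j(p)/p)\}=\sum_{r\ge1}b_F(p^r)/p^r$. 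Taking logarithms in \eqref{MT1} (the factor $1+O(e^{-C_F\sqrt{\log x}})$ contributes an additive $O(e^{-C_F\sqrt{\log x}})$ because it is $o(1)$, and one checks that the branch can be fixed so that the constant below is genuinely independent of $x$) then yields
\begin{align*}
\sum_{p\le x}\sum_{r=1}^\infty\frac{b_F(p^r)}{p^r}=m\log\log x+B_F+O(e^{-C_F\sqrt{\log x}}),\qquad B_F:=m\gamma+\log c_{-m}.
\end{align*}
Next I would pass to $A(x):=\sum_{n\le x}b_F(n)/n=\sum_{p^r\le x}b_F(p^r)/p^r$: the difference between the double sum above and $A(x)$ equals $\sum_{p\le x}\sum_{p^r>x}b_F(p^r)/p^r$, which, using $|b_F(p^r)|\le k$ together with the standard estimate that $(y,2y]$ contains $O(\sqrt{y})$ prime powers $p^r$ with $r\ge2$, is $O(x^{-1/2})$ and hence absorbed. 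Therefore $A(x)=m\log\log x+B_F+O(e^{-C_F\sqrt{\log x}})$.

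Now apply partial summation with $a_n=b_F(n)/n$ and $f(t)=t\log t$, so that $f'(t)=1+\log t$; since $A(t)=0$ for $t<2$ this gives
\begin{align*}
\sum_{n\le x}b_F(n)\log n=x\log x\,A(x)-\int_{2}^{x}A(t)(1+\log t)\,dt.
\end{align*}
Inserting the asymptotic for $A$ and using $\frac{d}{dt}(t\log t\log\log t)=(1+\log t)\log\log t+1$ and $\frac{d}{dt}(t\log t)=1+\log t$ to evaluate $\int_{e}^{x}(1+\log t)\log\log t\,dt=x\log x\log\log x-x+e$ and $\int_{e}^{x}(1+\log t)\,dt=x\log x-e$, the two main terms $mx\log x\log\log x$ and $B_Fx\log x$ cancel, leaving
\begin{align*}
\sum_{n\le x}b_F(n)\log n=mx+O(1)+O\!\left(x\log x\,e^{-C_F\sqrt{\log x}}\right)+O\!\left(\int_{e}^{x}(1+\log t)e^{-C_F\sqrt{\log t}}\,dt\right).
\end{align*}
Splitting the remaining integral at $t=x^{1-\delta}$ for small $\delta>0$ bounds it by $O(x^{1-\delta/2})+O\!\left(x\log x\,e^{-C_F\sqrt{1-\delta}\,\sqrt{\log x}}\right)$; since $\log x\,e^{-c\sqrt{\log x}}=O(e^{-c'\sqrt{\log x}})$ for any $c'<c$ and $x$ large, every error term is $O(xe^{-C_F''\sqrt{\log x}})$ for any fixed $C_F''<C_F$, which is \eqref{PNT2}.

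The computation is essentially routine once \eqref{MT1} is available; the two points needing a little care are (i) the passage from the partial Euler product to the Dirichlet series $\sum_n b_F(n)/n$ — in particular fixing a single branch of the logarithm so that $B_F$ is a true constant, and checking that the tail over prime powers with $p^r>x$ is negligible — and (ii) the bookkeeping of the error terms in the final partial summation, where the polynomial factor $\log x$ and the loss in the exponent incurred by splitting the integral must be absorbed into a constant strictly smaller than $C_F$. I do not expect any substantial obstacle beyond Theorem~\ref{MTH1} itself.
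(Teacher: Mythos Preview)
Your argument is correct and follows the same underlying strategy as the paper: both deduce the prime number theorem from Theorem~\ref{MTH1} by taking logarithms and applying Abel summation, absorbing the extra factors of $\log x$ into a slightly smaller exponent $C_F''<C_F$.

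The only difference is organizational. The paper proceeds in two partial-summation steps, restricting to primes first: it passes from $\sum_{p\le x}b_F(p)/p$ (Proposition~\ref{M2M}) to $\sum_{p\le x}b_F(p)\log p/p$ (Proposition~\ref{M1M}) and then to $\sum_{p\le x}b_F(p)\log p$, handling the higher prime powers separately at the end via the crude bound $\sum_{p^r\le x,\,r\ge2}b_F(p^r)\log p^r\ll\sqrt{x}(\log x)^2$. You instead keep all prime powers in the running sum $A(x)=\sum_{n\le x}b_F(n)/n$ and apply a single partial summation with $f(t)=t\log t$, which is slightly more economical and avoids introducing the intermediate constant $M_1$. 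Both routes give the same error term, and the bookkeeping you flag in points (i) and (ii) is handled in the paper by exactly the same devices (the tail estimates \eqref{LLA1}--\eqref{LLA2} and splitting the final integral at $\sqrt{x}$).
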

We shall give an example of Theorems \ref{MTH1} and \ref{MTH2}. In the case of $\zeta_K\in\mathcal{S}^{\#}$, we know that $\zeta_K$ satisfies (\ref{AEP}), (\ref{AZF}), and it is known that $\zeta_K(s)$ has a simple pole as $s=1$. Therefore the following fact is obtained:
\begin{cor} We obtain the Metens' theorem for $\zeta_K(s)$: 
\begin{align*}
\prod_{N\mathfrak{p}\leq x}\left(1-\frac{1}{N\mathfrak{p}}\right)^{-1}=\alpha_Ke^{\gamma}(\log x)(1+O(e^{-C_K\sqrt{\log x}}))
\end{align*}
and the prime number theorem for $\zeta_K(s)$:
\begin{align*}
\sum_{N{\mathfrak{p}}^r\leq x}\log(N\mathfrak{p}^r)=x+O(xe^{-c_K\sqrt{\log x}}),
\end{align*}
where $\alpha_K$ is the residue for $\zeta_K(s)$ at $s=1$, and $C_K, c_K$ are  positive constants such that $c_K<C_K$ depending on $K$.
\end{cor}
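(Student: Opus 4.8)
The plan is to instantiate Theorems~\ref{MTH1} and~\ref{MTH2} at $F=\zeta_K$ and then to translate their conclusions from the rational-prime bookkeeping used there into the prime-ideal bookkeeping of Rosen and Landau. First I would verify the hypotheses for $F=\zeta_K$. One has $\zeta_K\in\mathcal{S}^{\#}$: writing $\zeta_K(s)=\sum_{\mathfrak a}(N\mathfrak a)^{-s}=\sum_{n\ge 1}a(n)n^{-s}$ with $a(n)=\#\{\mathfrak a:N\mathfrak a=n\}$ gives (\ref{CV}); Hecke's analytic continuation and functional equation give (\ref{AC}), with a simple pole at $s=1$ so that $m=1$ and $c_{-m}=c_{-1}=\lim_{s\to 1}(s-1)\zeta_K(s)=\alpha_K$, and (\ref{FE}). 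For (\ref{AEP}) I would start from $\zeta_K(s)=\prod_{\mathfrak p}(1-(N\mathfrak p)^{-s})^{-1}$: grouping the primes $\mathfrak p_1,\dots,\mathfrak p_g$ over a rational prime $p$, with residue degrees $f_i$ and ramification indices $e_i$ (so $\sum_i e_if_i=n:=[K:\mathbb{Q}]$), and using $(1-p^{-f_is})^{-1}=\prod_{\zeta^{f_i}=1}(1-\zeta p^{-s})^{-1}$, one writes the local factor at $p$ as a product of $\sum_i f_i\le n$ factors $(1-\alpha p^{-s})^{-1}$ with $|\alpha|=1$; appending $n-\sum_i f_i$ trivial factors (that is, $\alpha_j(p)=0$) at the finitely many ramified $p$ yields (\ref{AEP}) with $k=n$ and $|\alpha_j(p)|\le 1$. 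Finally, by Remark~\ref{NEP} the function $\zeta_K$ satisfies (\ref{LG}) with $\theta=0$, and here $b_{\zeta_K}(p^m)=\sum_{\mathfrak p\mid p,\ f_{\mathfrak p}\mid m}f_{\mathfrak p}/m$ is real and $\ge 0$; since $\zeta_K$ also has a simple pole at $s=1$, the displayed fact preceding Theorem~\ref{MTH1} gives (\ref{AZF}).

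Granting this, Theorem~\ref{MTH1} with $m=1$ yields, on setting $C_K:=C_{\zeta_K}$,
\begin{align*}
\prod_{p\le x}\prod_{j=1}^{n}\left(1-\frac{\alpha_j(p)}{p}\right)^{-1}=\alpha_Ke^{\gamma}(\log x)\left(1+O(e^{-C_K\sqrt{\log x}})\right).
\end{align*}
For $\zeta_K$ one has $\prod_{j=1}^{n}(1-\alpha_j(p)/p)^{-1}=\prod_{\mathfrak p\mid p}(1-1/N\mathfrak p)^{-1}$, so the left-hand side equals $\prod_{\mathfrak p:\,p(\mathfrak p)\le x}(1-1/N\mathfrak p)^{-1}$, the product over the prime ideals lying over a rational prime $\le x$, where $p(\mathfrak p)$ denotes the rational prime below $\mathfrak p$. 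Since $N\mathfrak p\le x$ forces $p(\mathfrak p)\le x$, this product equals $\prod_{N\mathfrak p\le x}(1-1/N\mathfrak p)^{-1}$ times the finite product over those $\mathfrak p$ with $p(\mathfrak p)\le x<N\mathfrak p$; these exceptional $\mathfrak p$ have $f_{\mathfrak p}\ge 2$, so the logarithm of the exceptional product is $\ll\sum_{\mathfrak p:\,f_{\mathfrak p}\ge 2,\,N\mathfrak p>x}(N\mathfrak p)^{-1}\ll x^{-1/2}$ and the exceptional product is $1+O(x^{-1/2})$. As $x^{-1/2}\ll e^{-C_K\sqrt{\log x}}$, the first formula of the corollary follows.

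For the prime number theorem, Theorem~\ref{MTH2} with $m=1$ gives $\sum_{n\le x}b_{\zeta_K}(n)\log n=x+O(xe^{-c_K\sqrt{\log x}})$ with $c_K:=C_{\zeta_K}''<C_{\zeta_K}=C_K$, as the statement requires. Using $b_{\zeta_K}(p^m)\log(p^m)=\sum_{\mathfrak p\mid p,\ f_{\mathfrak p}\mid m}f_{\mathfrak p}\log p=\sum_{\mathfrak p\mid p,\ f_{\mathfrak p}\mid m}\log N\mathfrak p$ and reorganising the sum over prime powers, one gets $\sum_{n\le x}b_{\zeta_K}(n)\log n=\sum_{\mathfrak p}\sum_{r\ge 1,\,(N\mathfrak p)^r\le x}\log N\mathfrak p$, which differs from $\sum_{N\mathfrak p^r\le x}\log(N\mathfrak p^r)$ by $\sum_{r\ge 2}(r-1)\vartheta_K(x^{1/r})$ with $\vartheta_K(y)=\sum_{N\mathfrak p\le y}\log N\mathfrak p\ll y$ (an elementary bound); this discrepancy is $O(\sqrt x\,(\log x)^2)$, which is absorbed into $O(xe^{-c_K\sqrt{\log x}})$ after relabelling the constant. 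This gives the second formula, completing the corollary.

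All of the analytic substance sits in Theorems~\ref{MTH1} and~\ref{MTH2}; for this corollary the only delicate point is the dictionary between the author's indexing (product over rational primes carrying all $k$ local roots; the sum $\sum_n b_F(n)\log n$) and the classical one (product over prime ideals with $N\mathfrak p\le x$; the sum $\sum_{N\mathfrak p^r\le x}\log N\mathfrak p^r$), and I expect the only — very mild — obstacle to be checking that the two discrepancies, $1+O(x^{-1/2})$ in the product and $O(\sqrt x\,(\log x)^2)$ in the sum, are indeed negligible against $e^{-C\sqrt{\log x}}$.
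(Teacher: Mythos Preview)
Your proposal is correct and follows exactly the approach the paper intends: verify that $\zeta_K\in\mathcal{S}^{\#}$ satisfies (\ref{AEP}) and (\ref{AZF}) with a simple pole at $s=1$, then read off the two formulas from Theorems~\ref{MTH1} and~\ref{MTH2}. The paper itself gives only the one-sentence justification preceding the corollary and leaves the passage from the rational-prime indexing $\prod_{p\le x}\prod_j(1-\alpha_j(p)/p)^{-1}$ and $\sum_{n\le x}b_F(n)\log n$ to the prime-ideal indexing $\prod_{N\mathfrak p\le x}$ and $\sum_{N\mathfrak p^r\le x}\log(N\mathfrak p^r)$ implicit; your explicit handling of the two discrepancies (each $O(x^{-1/2})$ or $O(\sqrt{x}(\log x)^2)$, hence absorbed) is a welcome addition rather than a deviation.
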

In the case of automorphic $L$-function, we see that the Rankin-Selberg $L$-function $L_{f\times g}(s)$ belongs to the Selberg class, and it is known that if $L_{f\times g}(s)$ has a simple pole at $s=1$ when $f=g$ and no pole in the whole $s$-plane when $f\ne g$, where $f$ and $g$ are cusp forms of weight $k$ for $SL_2(\mathbb{Z})$. Assume that $f, g$ are normalized Hecke eigenforms, and the Fourier expansion of $X=f, g$ are given by $X(z)=\sum_{n=1}^\infty\lambda_X(n)n^{(k-1)/2}e^{2\pi inz}$. Then we obtain the following corollary: 
\begin{cor} We have the Metens' theorem for $L_{f\times g}(s)$:
\begin{align*}
&\prod_{p\leq x}\left(1-\frac{(\alpha_f\alpha_g)(p)}{p}\right)^{-1}\left(1-\frac{(\alpha_f\beta_g)(p)}{p}\right)^{-1}\left(1-\frac{(\beta_f\alpha_g)(p)}{p}\right)^{-1}\times\\
&\times\left(1-\frac{(\beta_f\beta_g)(p)}{p}\right)^{-1}=
\begin{cases} 
A_{f\times f}e^\gamma(\log x)(1+O(e^{-C_{f, g}\sqrt{\log x}})), & f=g, \\
L_{f\times g}(1)+O(e^{-C_{f, g}\sqrt{\log x}}), & f\ne g,
\end{cases}
\end{align*}
and the prime number theorem for $L_{f\times g}(s)$:
\begin{align*}
\sum_{p^r\leq x}b_{f\times g}(n)\log n=\begin{cases} x+O(xe^{-c_{f, g}\sqrt{\log x}}), & f=g, \\ O(xe^{-c_{f, g}\sqrt{\log x}}), & f\ne g.\end{cases}
\end{align*}
Where $\alpha_j, \beta_j$ satisfy $(\alpha_j+\beta_j)(p)=\lambda_j(p)$, $(\alpha_j\beta_j)(p)=1$, $A_{f\times f}$ is the residue for $L_{f\times f}(s)$ at $s=1$, 
$c_{f,g}, C_{f,g}$ are positive constants such that $c_{f,g}<C_{f,g}$ depending on $f, g$, and $b_{f\times g}(n)$ are given by
\begin{align*}
b_{f\times g}(n)=\begin{cases}  (\alpha_f^r+\beta_f^r+\alpha_g^r+\beta_g^r)(p)/r, & n=p^r,\\ 0, & n\ne p^r. \end{cases}
\end{align*}
\end{cor}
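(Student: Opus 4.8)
The plan is to obtain the corollary as the special case $F=L_{f\times g}$ of Theorems~\ref{MTH1} and \ref{MTH2}: one only has to verify hypotheses (\ref{AEP}) and (\ref{AZF}) for $L_{f\times g}$ and to read off the order $m$ at $s=1$ and the constant $c_{-m}$. First I would recall the standard facts about the Rankin--Selberg convolution of normalized Hecke eigenforms $f,g$ of weight $k$ for $SL_2(\mathbb{Z})$: $L_{f\times g}$ belongs to $\mathcal{S}$ (hence to $\mathcal{S}^{\#}$), it has degree $4$, it satisfies the functional equation of condition (\ref{FE}), and $(s-1)^{m}L_{f\times g}(s)$ is entire of finite order with $m=1$ when $f=g$ (so $L_{f\times f}$ has a simple pole at $s=1$) and $m=0$ when $f\neq g$. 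Writing the local parameters through $(\alpha_j+\beta_j)(p)=\lambda_j(p)$ and $(\alpha_j\beta_j)(p)=1$ for $j\in\{f,g\}$, Deligne's bound gives $|\alpha_f(p)|=|\beta_f(p)|=|\alpha_g(p)|=|\beta_g(p)|=1$, and the classical Rankin--Selberg factorization of the local factors reads
\begin{align*}
L_{f\times g}(s)=\prod_{p}\prod_{i=1}^{4}\Bigl(1-\frac{\gamma_i(p)}{p^{s}}\Bigr)^{-1},
\end{align*}
where $\{\gamma_i(p)\}_{i=1}^{4}=\{(\alpha_f\alpha_g)(p),(\alpha_f\beta_g)(p),(\beta_f\alpha_g)(p),(\beta_f\beta_g)(p)\}$ and $|\gamma_i(p)|\le1$. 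This is exactly condition (\ref{AEP}) with $k=4$, and by Remark~\ref{NEP} it also gives (\ref{LG}) with $\theta=0$ and with $b_{f\times g}$ supported on prime powers, $b_{f\times g}(p^{r})=\frac{1}{r}\sum_{i=1}^{4}\gamma_i(p)^{r}$.

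It remains to verify the zero-free region (\ref{AZF}) for $L_{f\times g}$. When $f=g$ this is immediate from the fact stated just before Theorem~\ref{MTH1}: the coefficients $b_{f\times f}(p^{r})=\frac{1}{r}(\alpha_f(p)^{r}+\beta_f(p)^{r})^{2}$ are real and nonnegative and $L_{f\times f}$ has a simple pole at $s=1$, so $L_{f\times f}$ satisfies (\ref{AZF}). When $f\neq g$ the $b_{f\times g}(p^{r})$ need not be nonnegative, so instead I would run the classical argument for the zero-free region of $\zeta(s)$ (Montgomery and Vaughan \cite[Lemma 6.5 and Theorem 6.6]{M&V}) on the auxiliary function $G(s)=\zeta(s)\,L_{f\times f}(s)\,L_{g\times g}(s)\,L_{f\times g}(s)^{2}$, using $L_{g\times f}=L_{f\times g}$ since the Hecke eigenvalues are real. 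A short computation gives
\begin{align*}
b_{G}(p^{r})=\frac{1}{r}\Bigl(1+\bigl(\alpha_f(p)^{r}+\beta_f(p)^{r}+\alpha_g(p)^{r}+\beta_g(p)^{r}\bigr)^{2}\Bigr)\ge0,
\end{align*}
so $G$ has nonnegative logarithmic coefficients; and since $L_{f\times g}(1)\neq0$ (the classical Rankin--Selberg non-vanishing), $G$ has a pole of order exactly $3$ at $s=1$. Feeding this into the Montgomery--Vaughan argument shows that $L_{f\times g}$ has no zeros on $\text{Re }s=1$ and, in quantitative form, satisfies (\ref{AZF}); the only differences from the case of $\zeta(s)$ are the order of the pole and the size of the conductor, which affect only the implied constants. (Alternatively one may cite the classical zero-free region for Rankin--Selberg $L$-functions directly.)

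With (\ref{AEP}) and (\ref{AZF}) established, the corollary follows by substituting $F=L_{f\times g}$, $k=4$, and the four local factors above into Theorem~\ref{MTH1} and Theorem~\ref{MTH2}. Here $c_{-m}=\lim_{s\to1}(s-1)^{m}L_{f\times g}(s)$ equals the residue $A_{f\times f}$ when $f=g$ (where $m=1$, $e^{\gamma m}=e^{\gamma}$ and $(\log x)^{m}=\log x$) and equals $L_{f\times g}(1)$ when $f\neq g$ (where $m=0$, $e^{\gamma m}=1$ and $(\log x)^{m}=1$, so the constant $L_{f\times g}(1)$ is absorbed into the error term), which gives the stated Mertens formula. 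Likewise Theorem~\ref{MTH2} gives $\sum_{p^{r}\le x}b_{f\times g}(p^{r})\log(p^{r})=mx+O(xe^{-C_F''\sqrt{\log x}})$, that is $x+O(xe^{-C_F''\sqrt{\log x}})$ when $f=g$ and $O(xe^{-C_F''\sqrt{\log x}})$ when $f\neq g$; renaming the constants $C_F$ of Theorem~\ref{MTH1} and $C_F''$ of Theorem~\ref{MTH2} as $C_{f,g}$ and $c_{f,g}$ (so that $c_{f,g}<C_{f,g}$) completes the proof. I expect the case $f\neq g$ of the zero-free region to be the only genuinely non-routine step; everything else is a direct specialization of Theorems~\ref{MTH1} and \ref{MTH2}.
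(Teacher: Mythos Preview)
Your proposal is correct and follows the same overall scheme as the paper: verify conditions (\ref{AEP}) and (\ref{AZF}) for $L_{f\times g}$, identify $m$ and $c_{-m}$, and then specialize Theorems~\ref{MTH1} and \ref{MTH2}. The only point of divergence is how you establish the zero-free region (\ref{AZF}) when $f\neq g$. The paper does not argue this at all; it simply cites Ichihara \cite{ICI} for the Siegel--Walfisz theorem for Rankin--Selberg $L$-functions, which contains the required zero-free region. You instead sketch a self-contained proof via the auxiliary product $G(s)=\zeta(s)L_{f\times f}(s)L_{g\times g}(s)L_{f\times g}(s)^{2}$, whose logarithmic coefficients $b_G(p^{r})=\tfrac{1}{r}\bigl(1+(\alpha_f^{r}+\beta_f^{r}+\alpha_g^{r}+\beta_g^{r})^{2}\bigr)$ are indeed nonnegative, and then run the Montgomery--Vaughan argument. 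This is a legitimate alternative and has the virtue of being self-contained, though note that the boxed fact preceding Theorem~\ref{MTH1} is stated only for a zero or a \emph{simple} pole at $s=1$, whereas $G$ has a triple pole; so you really do have to rerun the argument rather than quote that fact verbatim, as you correctly indicate. Your parenthetical ``alternatively one may cite the classical zero-free region for Rankin--Selberg $L$-functions directly'' is exactly what the paper does.
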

In this corollary, we used the fact $b_{f\times g}(n)\geq 0$ when $f=g$, and the result that if $f\ne g$ then the condition (\ref{AZF}) are satisfied (see Ichihara \cite{ICI}). 
In this paper, we shall show Theorem \ref{MTH1} in Section \ref{PFTH1} and Theorem \ref{MTH2} in Section \ref{PFTH2}. 

\section{Proof of Theorem \ref{MTH1}}\label{PFTH1}
Let $F\in\mathcal{S}^{\#}$ and put the left hand of (\ref{MT1}) by $F_x(1)$. We shall give the approximate formula of $\log F_x(1)$. By using Remarks \ref{NEP} and (\ref{EP}), we can write
\begin{align}
\log F_x(1)=&\sum_{p\leq x}\sum_{r=1}^\infty\frac{b_F(p^r)}{p^r}\notag\\
=&\sum_{n\leq x}\frac{b_F(n)}{n}+\sum_{\sqrt{x}<p\leq x}\sum_{p^r>x}\frac{b_F(p^r)}{p^r}+\sum_{p\leq \sqrt{x}}\sum_{p^r>x}\frac{b_F(p^r)}{p^r}. \label{MLA}
\end{align}
It is clear that the second and third terms of (\ref{MLA}) are estimated as
\begin{align}
&\sum_{\sqrt{x}<p\leq x}\sum_{p^r>x}\frac{b_F(p^r)}{p^r}\ll\sum_{\sqrt{x}<p\leq x}\sum_{r=2}^\infty\frac{1}{p^r}\ll\sum_{\sqrt{x}<p\leq x}\frac{1}{p^2}\ll\frac{1}{\sqrt{x}},\label{LLA1}\\
&\sum_{p\leq \sqrt{x}}\sum_{p^r>x}\frac{b_F(p^r)}{p^r}\ll\sum_{p\leq \sqrt{x}}\sum_{r>\frac{\log x}{\log p}}\frac{1}{p^r}\ll\sum_{p\leq\sqrt{x}}\frac{1}{x}\ll\frac{1}{\sqrt{x}}\label{LLA2}.
\end{align}
Applying Perron's formula to the first term of (\ref{MLA}) (see Liu and Ye {\cite[Corollary 2.2]{L&Y}} or {\cite[Chapter 5.1]{M&V}}),
we get
\begin{align}
\sum_{n\leq x}\frac{b_F(n)}{n}=\frac{1}{2\pi i}\int_{b-iT}^{b+iT}\frac{x^s}{s}\log F(1+s)ds+O(e^{-c\sqrt{\log x}}), \label{LLB}
\end{align}
where we put $b=1/\log x$ and $T=e^{\sqrt{\log x}}$, and the following fact were used: 
\begin{align*}
\log F(\sigma)\ll\zeta(\sigma)\ll\frac{1}{\sigma-1} \;\; (\sigma\in\mathbb{R}_{>1}), \quad 
-\frac{\sqrt{\log x}}{2}+\log\log x\leq -c\sqrt{\log x}
\end{align*}
where $c\in(0,1/2)$ is a constant. 

Now we consider the integral of (\ref{LLB}). If we put $b'=c_F/\log T=c_F/\sqrt{\log x}$ and take large $x$, from (\ref{AZF}) we see that
\begin{align}
F(\sigma+it)\ne0; \quad t\in[-T,T], \ \sigma\geq1-b' \label{IZF}
\end{align}
on condition that $s=1$ is excluded when $F(s)$ has zero on $s=1$. Note that $F(s)$ has no Siegel zeros in the region $\text{Re }s\leq 1-b'$ because $x$ is taken large. 
Define the contour 
\begin{align*}
L_{+1}=&\{-b'+it\mid t\in[0,T]\}, & L_{-1}=&\{-b'+it\mid t\in[-T,0]\}, \\
L_{-2}=&\{-\sigma-iT\mid \sigma\in[-b',b]\}, & L_{+2}=&\{\sigma+iT\mid \sigma\in[-b',b]\},\\ 
C=&\{b'e^{i\theta}\mid \theta\in[-\pi,\pi]\}.&  &
\end{align*}
Using (\ref{IZF}) and Cauchy's residue theorem, we have
\begin{align}
\frac{1}{2\pi i}\int_{b-iT}^{b+iT}\frac{x^s}{s}\log F(s+1)ds
=\frac{1}{2\pi i}\int_{C}\frac{x^s}{s}\log F(s+1)ds+O\Biggl(\sum_{j=\pm1,\pm2}I_j\Biggr), \label{LLC}
\end{align}
where
\begin{align*}
I_j=\frac{1}{2\pi i}\int_{L_j}\frac{x^s}{s}\log F(s+1)ds.
\end{align*}

First we shall calculate estimates of $I_j$. In the case of $j=\pm2$, Phragm\'{e}n-Lindel\"{o}f theorem and Stirling's formula give that $\log F(1-\sigma\pm iT)\ll(\log T)^2$ for $\sigma\in[-b',b]$. Then $I_{\pm2}$ are estimated as
\begin{align}
I_{\pm2}
\ll\frac{(\log T)^2}{T}\int_{-b'}^{b}x^{\sigma}d\sigma\ll (\sqrt{\log x})^2e^{-\sqrt{\log x}}\ll e^{-C_F\sqrt{\log x}}. \label{LLK}
\end{align}
In the case of $j=\pm1$, we use the following result (see \cite[Lemma 6.3]{M&V}):
\begin{lem}\label{FZE}
Let $f(z)$ be an analytic function in the region containing the disc $|z|\leq 1$, supposing $|f(z)|\leq M$ for $|z|\leq 1$ and $f(0)\ne 0$. Fix $r$ and $R$ such that $0<r<R<1$. Then, for $|z|\leq r$ we have
\begin{align*}
\frac{f'}{f}(z)=\sum_{|\rho|\leq R}\frac{1}{z-\rho}+O\left(\log\frac{M}{|f(0)|}\right)
\end{align*}
where $\rho$ is a zero of $f(s)$.
\end{lem}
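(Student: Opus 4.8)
The plan is to prove the lemma along classical lines: remove the zeros of $f$ lying in $|z|\le R$ by dividing out Blaschke-type factors, control the logarithmic derivative of the resulting zero-free function by the Borel--Carath\'eodory theorem, and bound the number of removed zeros by Jensen's formula. Fix once and for all an auxiliary radius $R_1$ with $R<R_1<1$, say $R_1=(R+1)/2$, and let $\rho_1,\dots,\rho_n$ be the zeros of $f$ in the closed disc $|z|\le R$, counted with multiplicity. (Since $f(0)\ne0$, $f$ has only finitely many zeros in $|z|\le R_1$; zeros lying exactly on $|z|=R$ require only a trivial modification, as the product $g$ below extends holomorphically across them, so we may assume $f$ does not vanish on $|z|=R$.) Jensen's formula applied on the circle $|z|=R_1$ gives
\[
\sum_{k=1}^{n}\log\frac{R_1}{|\rho_k|}\;\le\;\frac{1}{2\pi}\int_{0}^{2\pi}\log\bigl|f(R_1e^{i\vartheta})\bigr|\,d\vartheta-\log|f(0)|\;\le\;\log\frac{M}{|f(0)|},
\]
and since $\log(R_1/|\rho_k|)\ge\log(R_1/R)>0$ for every $k$, this yields $n\ll\log(M/|f(0)|)$, the implied constant depending only on $R$ and $R_1$.

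Next I would set
\[
g(z)=f(z)\prod_{k=1}^{n}\frac{R^{2}-\overline{\rho_k}\,z}{R(z-\rho_k)}.
\]
Then $g$ is analytic and zero-free on $|z|\le R$; each factor has modulus $1$ on $|z|=R$, so $|g(z)|=|f(z)|\le M$ there and hence $|g(z)|\le M$ on all of $|z|\le R$ by the maximum modulus principle; moreover $|g(0)|=|f(0)|\prod_{k}(R/|\rho_k|)\ge|f(0)|$. Fixing a holomorphic branch of $\log g$ on $|z|\le R$, we get $\operatorname{Re}\bigl(\log g(z)-\log g(0)\bigr)=\log\bigl(|g(z)|/|g(0)|\bigr)\le\log(M/|f(0)|)$, so the Borel--Carath\'eodory theorem applied to $\log g-\log g(0)$ bounds $|\log g(z)-\log g(0)|$ by $O(\log(M/|f(0)|))$ on the disc $|z|\le(R+r)/2$. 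Cauchy's estimate for the derivative then gives $g'(z)/g(z)\ll\log(M/|f(0)|)$ for $|z|\le r$.

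Finally, logarithmic differentiation gives, for $|z|\le r$,
\[
\frac{f'}{f}(z)=\frac{g'}{g}(z)+\sum_{k=1}^{n}\left(\frac{1}{z-\rho_k}+\frac{\overline{\rho_k}}{R^{2}-\overline{\rho_k}\,z}\right),
\]
where the terms $1/(z-\rho_k)$ constitute exactly $\sum_{|\rho|\le R}1/(z-\rho)$. For $|z|\le r$ one has $|R^{2}-\overline{\rho_k}z|\ge R^{2}-Rr=R(R-r)$ and $|\rho_k|\le R$, so each remaining summand has modulus at most $1/(R-r)$ and their total is $\le n/(R-r)\ll\log(M/|f(0)|)$; together with the bound on $g'/g$ this proves the lemma. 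The argument is essentially routine; the only delicate point, and hence the one I expect to require the most care, is the behaviour near the circle $|z|=R$, where a zero on or very close to $|z|=R$ makes the corresponding Blaschke factor nearly singular — this is precisely why the intermediate radius $R_1$ is introduced (to keep both the Jensen count and the maximum-modulus bound robust) and why a zero exactly on $|z|=R$ is disposed of by the limiting remark above.
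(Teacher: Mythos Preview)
Your proof is correct and is essentially the classical argument: Jensen to count zeros, Blaschke factors to remove them, Borel--Carath\'eodory plus Cauchy's estimate to control $g'/g$, and a direct bound on the leftover Blaschke terms. The paper itself does not prove this lemma at all; it simply quotes it as \cite[Lemma~6.3]{M&V}, and what you have written is exactly the proof given there (up to cosmetic differences in the choice of auxiliary radii). So there is nothing to compare: your argument \emph{is} the standard one the paper is citing.
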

Put $f(z)=(z+1/2+it)^mF(1+z+(1/2+it))$, $R=5/6$, $r=2/3$ in Lemma \ref{FZE}, and use the assuming the condition (\ref{AZF}). Then the following estimates are obtained by the same discussion of the proof of \cite[Theorem 6.7]{M&V}:   
\begin{align*}
\log s^mF(s+1)\ll
\begin{cases}
\log(|t|+4), & |t|\geq 7/8 \text{ and } \sigma\geq -b',\\
 1, & |t|\leq 7/8 \text{ and } \sigma\geq -b',
\end{cases}
\end{align*}
and $I_{\pm 1}$ are estimated as
\begin{align}
I_{\pm1}\ll&\left(\int_0^{7/8}+\int_{7/8}^T\right)\frac{x^{-b'}}{|s|}\left(|\log s^m|+|\log s^mF(s+1)|\right)dt\notag\\
\ll&\int_0^{7/8}\frac{x^{-b'}}{b'}(\log b'+1)dt+\int_{7/8}^T\frac{x^{-b'}}{t}(\log(t+b')+\log(t+4))dt\notag\\
\ll&e^{-c_F\sqrt{\log x}}\sqrt{\log x}\log\log x+e^{-c_F\sqrt{\log x}}\log x
\ll e^{-C_F\sqrt{\log x}}.\label{LLM}
\end{align}

Secondly we consider the integral term of (\ref{LLC}). From (\ref{AC}), we see that $F(s)$ has a pole of order $m$ on $s=1$ where $m\in\mathbb{Z}_{\geq 1}$, or has a zero of order $-m$ in $s=1$ where $m\in\mathbb{Z}_{\leq 0}$. Considering the Laurent expansion of $F(s)$ in $s=1$, we get $c_{-m}=\lim_{s\to1}(s-1)^mF(s)\ne0$ for $m\in\mathbb{Z}$. Therefore, the following formula is obtained by Cauchy's residue theorem:
\begin{align}
\frac{1}{2\pi i}\int_{C}\frac{x^s}{s}\log F(s+1)ds=-\frac{m}{2\pi i}\int_C\frac{x^s}{s}\log sds+\log c_{-m} \label{LLD}
\end{align}
Here, the first term of (\ref{LLD}) is written as 
\begin{align}
\int_C\frac{x^s}{s}\log sds=i(\log b')\int_{-\pi}^\pi e^{b'e^{i\theta}\log x}d\theta-\int_{-\pi}^\pi\theta e^{b'e^{i\theta}\log x}d\theta. \label{LLE}
\end{align}
Using termwise integration, the first and second terms on the right hand side of (\ref{LLE}) are calculated as
\begin{align}
\int_{-\pi}^\pi e^{b'e^{i\theta}\log x}d\theta=&\int_{-\pi}^\pi d\theta+\sum_{r=1}^\infty\frac{(b'\log x)^r}{r!}\int_{-\pi}^\pi e^{ir\theta}d\theta=2\pi,\label{LLF}\\
\int_{-\pi}^\pi\theta e^{b'e^{i\theta}\log x}d\theta
=&\sum_{r=1}^\infty\frac{(b'\log x)^r}{r!}\int_{-\pi}^\pi\theta e^{ir\theta}d\theta
=\frac{2\pi}{i}\sum_{r=1}^\infty\frac{(-1)^r}{r!}\int_{0}^{b'\log x}u^{r-1}du\notag\\=&
\frac{2\pi}{i}\int_{0}^{b'\log x}\frac{e^{-u}-1}{u}du. \label{LLG}
\end{align}
Moreover, (\ref{LLG}) is calculated as
\begin{align}
\int_{0}^{b'\log x}\frac{e^{-u}-1}{u}du=&\gamma+\int_1^{b'\log x}\frac{du}{u}-\int_{b'\log x}^\infty\frac{e^{-u}}{u}du\notag\\
=&\gamma+\log\log x+\log b'+O(e^{-C_F\sqrt{\log x}}), \label{LLH}
\end{align}
where the following result was used:
\begin{align*}
\int_0^1\frac{1-e^{-u}}{u}du-\int_1^\infty\frac{e^{-u}}{u}du=\gamma.
\end{align*}

Finally combining (\ref{MLA})--(\ref{LLH}), we get
\begin{align}
\log F_x(1)=\log c_{-m}+m\gamma+m\log\log x+O(e^{-C_F\sqrt{\log x}}). \label{MFX}
\end{align}
Taking exponentials in both sides of (\ref{MFX}) and using the fact $e^y=1+O(y)$ for $y\ll 1$, we complete 
the proof of Theorem \ref{MTH1}.

\section{Proof of Theorem \ref{MTH2}}\label{PFTH2}
First we  shall show the following result from Mertens' 3rd theorem:
\begin{pro}[Mertens' 2nd theorem]\label{M2M}
Let $F\in\mathcal{S}^{\#}$ and assume (\ref{AEP}) and (\ref{AZF}). Then we have
\begin{align*}
\sum_{p\leq x}\frac{b_F(p)}{p}=m\log\log x+M+O(e^{-C_F\sqrt{\log x}}),
\end{align*}
where $M$ is a constant given by
\begin{align*}
M=\log c_{-m}+m\gamma-\sum_{p\text{:prime}}\sum_{r=2}^\infty\frac{b_F(p^r)}{p^r}.
\end{align*}
\end{pro}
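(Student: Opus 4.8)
The plan is to obtain Proposition~\ref{M2M} directly from Mertens' 3rd theorem by a purely formal logarithmic manipulation, so that no new analytic input beyond equation~(\ref{MFX}) is needed. Recall that, by (\ref{EP}) and Remark~\ref{NEP}, the quantity $F_x(1)$ of Theorem~\ref{MTH1} satisfies the exact identity
\begin{align*}
\log F_x(1)=\sum_{p\leq x}\sum_{r=1}^\infty\frac{b_F(p^r)}{p^r}=\sum_{p\leq x}\frac{b_F(p)}{p}+\sum_{p\leq x}\sum_{r=2}^\infty\frac{b_F(p^r)}{p^r},
\end{align*}
so isolating the $r=1$ contribution gives
\begin{align*}
\sum_{p\leq x}\frac{b_F(p)}{p}=\log F_x(1)-\sum_{p\leq x}\sum_{r=2}^\infty\frac{b_F(p^r)}{p^r}.
\end{align*}
Into the first term on the right I would substitute the asymptotic formula (\ref{MFX}), namely $\log F_x(1)=\log c_{-m}+m\gamma+m\log\log x+O(e^{-C_F\sqrt{\log x}})$.

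It then remains only to replace the finite double sum $\sum_{p\leq x}\sum_{r\geq2}b_F(p^r)/p^r$ by the full series $\sum_{p}\sum_{r\geq2}b_F(p^r)/p^r$ occurring in the definition of $M$. For this I would first check absolute convergence: by Remark~\ref{NEP} we have $b_F(p^r)=(\alpha_1(p)^r+\cdots+\alpha_k(p)^r)/r$ with $|\alpha_j(p)|\leq1$, hence $|b_F(p^r)|\leq k$, so that
\begin{align*}
\sum_{p}\sum_{r=2}^\infty\frac{|b_F(p^r)|}{p^r}\leq k\sum_{p}\sum_{r=2}^\infty\frac{1}{p^r}\leq 2k\sum_{p}\frac{1}{p^2}<\infty,
\end{align*}
which in particular shows that $M$ is well defined. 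The same bound yields the tail estimate
\begin{align*}
\sum_{p>x}\sum_{r=2}^\infty\frac{b_F(p^r)}{p^r}\ll\sum_{n>x}\frac{1}{n^2}\ll\frac{1}{x}\ll e^{-C_F\sqrt{\log x}}
\end{align*}
for $x$ large, since $1/x=e^{-\log x}\leq e^{-C_F\sqrt{\log x}}$ once $\sqrt{\log x}\geq C_F$. Combining the four displays above gives
\begin{align*}
\sum_{p\leq x}\frac{b_F(p)}{p}=m\log\log x+\Bigl(\log c_{-m}+m\gamma-\sum_{p}\sum_{r=2}^\infty\frac{b_F(p^r)}{p^r}\Bigr)+O(e^{-C_F\sqrt{\log x}}),
\end{align*}
which is exactly the assertion of Proposition~\ref{M2M}.

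As for the main difficulty: there is essentially none, since all the analytic work has already been carried out in the proof of Theorem~\ref{MTH1}. The only points requiring a little care are the absolute convergence of the prime-power series defining $M$ and the elementary tail bound $\sum_{p>x}p^{-2}\ll1/x$, both of which follow at once from the estimate $|b_F(p^r)|\leq k$ supplied by hypothesis~(\ref{AEP}) via Remark~\ref{NEP}. One could alternatively run the argument through (\ref{MLA})--(\ref{LLA2}) rather than the exact Euler-product identity, but this merely trades one $O(1/\sqrt{x})$-type error for another and is not needed.
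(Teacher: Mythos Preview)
Your proof is correct and follows essentially the same route as the paper: the paper also writes $\sum_{p\leq x}b_F(p)/p=\log F_x(1)-\sum_{p\leq x}\sum_{r\geq2}b_F(p^r)/p^r$, completes the inner sum to all primes with tail $\ll\sum_{p>x}p^{-2}\ll 1/x$, and substitutes (\ref{MFX}). Your version is slightly more explicit in checking absolute convergence and in noting that $1/x$ is absorbed into $O(e^{-C_F\sqrt{\log x}})$, but the argument is the same.
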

Namely we may call the constant $M$ \textit{generalized Mertens' constant}.
\begin{proof}
By (\ref{MLA}) the sum of statement of Proposition \ref{M2M} is written as
\begin{align}
\sum_{p\leq x}\frac{b_F(p)}{p}=&\log F_x(1)-\sum_{p\leq x}\sum_{r=2}^\infty\frac{b_F(p^r)}{p^r}\notag\\
=&\log F_x(1)-\sum_{p\text{:prime}}\sum_{r=2}^\infty\frac{b_F(p^r)}{p^r}+\sum_{p>x}\sum_{r=2}^\infty\frac{b_F(p^r)}{p^r}. \label{M2A}
\end{align}
By the trivial estimate, the third term of the right-hand side of (\ref{M2A}) is estimated as
\begin{align}
\sum_{p>x}\sum_{r=2}^\infty\frac{b_F(p^r)}{p^r}\ll\sum_{p>x}\frac{1}{p^2}\ll\frac{1}{x}. \label{M2C}
\end{align}
Therefore from (\ref{MFX})--(\ref{M2C}), Proposition \ref{M2M} is obtained.
\end{proof}

Secondly we shall show the following formula from Proposition \ref{M2M}:
\begin{pro}[Mertens' 1st theorem]\label{M1M}
Let $F\in\mathcal{S}^{\#}$ and assume (\ref{AEP}) and (\ref{AZF}). Then we have
\begin{align*}
\sum_{p\leq x}\frac{b_F(p)\log p}{p}=m\log x+M_1+O(e^{-C_F'\sqrt{\log x}}),
\end{align*}
where $C_F'$ is a positive constant smaller than $C_F$ on Theorem \ref{MTH1}, $M_1$  is a constant given by
\begin{align*}
M_1=-\int_2^\infty\frac{\Delta_{2F}(u)}{u}du+M\log 2+m(\log 2)(\log\log 2-1),
\end{align*}
and $\Delta_{2F}(u)$ is given by
\begin{align*}
\Delta_{2F}(u)=\sum_{p\leq u}\frac{b_F(p)}{p}-m\log\log u-M,
\end{align*}
which is estimated as $\Delta_{2F}(u)=O(e^{-C_f\sqrt{\log u}})$.
\end{pro}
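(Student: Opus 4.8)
The plan is to derive Proposition \ref{M1M} from Proposition \ref{M2M} by partial summation, moving the weight $\log p$ onto the already-understood sum $\sum_{p\le u}b_F(p)/p$. Write $A(u)=\sum_{p\le u}\frac{b_F(p)}{p}$, so that $A(u)=m\log\log u+M+\Delta_{2F}(u)$ for $u\ge 2$ by Proposition \ref{M2M} (which also yields the stated bound $\Delta_{2F}(u)=O(e^{-C_F\sqrt{\log u}})$), while $A(u)=0$ for $u<2$. The measure $dA$ is supported on the primes with mass $b_F(p)/p$ at $p$, so Riemann--Stieltjes integration by parts gives
\begin{align*}
\sum_{p\le x}\frac{b_F(p)\log p}{p}=\int_{2^-}^{x}\log u\,dA(u)=A(x)\log x-\int_2^x\frac{A(u)}{u}\,du,
\end{align*}
the lower boundary term vanishing because $A(2^-)=0$.

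Next I would substitute the expansion of $A$ into both pieces. In the boundary term, $A(x)\log x=m\log x\log\log x+M\log x+\Delta_{2F}(x)\log x$, and the last summand is $O(e^{-C_F\sqrt{\log x}}\log x)=O(e^{-C_F'\sqrt{\log x}})$ for any fixed $C_F'<C_F$. In the integral, split $\int_2^x\frac{A(u)}{u}\,du$ into three parts. The part $\int_2^x\frac{m\log\log u}{u}\,du$ is evaluated by the substitution $v=\log u$, giving $m(\log x\log\log x-\log x)-m(\log 2\log\log 2-\log 2)$; the part $\int_2^x\frac{M}{u}\,du=M\log x-M\log 2$; and the part containing $\Delta_{2F}$ I would write as $\int_2^\infty\frac{\Delta_{2F}(u)}{u}\,du-\int_x^\infty\frac{\Delta_{2F}(u)}{u}\,du$.

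The key estimate is the convergence of, and tail bound for, the $\Delta_{2F}$-integral. Using $\Delta_{2F}(u)=O(e^{-C_F\sqrt{\log u}})$ and the substitutions $v=\log u$ and then $w=\sqrt v$, one gets
\begin{align*}
\int_x^\infty\frac{e^{-C_F\sqrt{\log u}}}{u}\,du=\int_{\log x}^\infty e^{-C_F\sqrt v}\,dv=2\int_{\sqrt{\log x}}^\infty w e^{-C_F w}\,dw\ll\sqrt{\log x}\,e^{-C_F\sqrt{\log x}}\ll e^{-C_F'\sqrt{\log x}},
\end{align*}
and in particular $\int_2^\infty\frac{\Delta_{2F}(u)}{u}\,du$ converges absolutely. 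Assembling everything, the terms $m\log x\log\log x$ and $M\log x$ cancel between the boundary term and the integral, leaving
\begin{align*}
\sum_{p\le x}\frac{b_F(p)\log p}{p}=m\log x-\int_2^\infty\frac{\Delta_{2F}(u)}{u}\,du+M\log 2+m(\log 2)(\log\log 2-1)+O(e^{-C_F'\sqrt{\log x}}),
\end{align*}
which is exactly the asserted formula with the stated $M_1$. The only real difficulty is bookkeeping: one must check that every discarded quantity is genuinely $O(e^{-C_F'\sqrt{\log x}})$ with a single $C_F'$, but since finitely many terms are involved and each loses at most a factor polynomial in $\sqrt{\log x}$ against $e^{-C_F\sqrt{\log x}}$, any fixed $C_F'<C_F$ suffices.
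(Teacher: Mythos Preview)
Your proof is correct and follows essentially the same approach as the paper: both apply Abel/partial summation to pass from $\sum_{p\le x}b_F(p)/p$ to $\sum_{p\le x}b_F(p)\log p/p$, substitute the expansion $A(u)=m\log\log u+M+\Delta_{2F}(u)$ from Proposition~\ref{M2M}, and split the resulting integral into the elementary piece and the $\Delta_{2F}$-piece, with the tail of the latter absorbed into the error term. If anything, you give more explicit detail on the tail bound $\int_x^\infty e^{-C_F\sqrt{\log u}}\,du/u$ than the paper does.
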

\begin{proof}
Using partial summation formula, we have
\begin{align}
&\sum_{p\leq x}\frac{b_F(p)\log p}{p}\notag\\
&=(\log x)\sum_{p\leq x}\frac{b_F(p)}{p}-\int_2^x\frac{1}{u}\sum_{p\leq u}\frac{b_F(p)}{p}du\notag\\
&=(\log x)\sum_{p\leq x}\frac{b_F(p)}{p}-\int_2^x\frac{m\log\log u+M}{u}du-\int_2^x\frac{\Delta_{2F}(u)}{u}du\notag\\
&=S_1+S_2+S_3. \label{M1A}
\end{align}
From Proposition \ref{M2M}, we see that $\Delta_{2F}(u)=O(e^{-C_F\sqrt{\log x}})$ and 
\begin{align}
S_1=&m(\log x)\log\log x+M\log x+O(e^{-C_F'\sqrt{\log x}}), \label{M1B}\\
S_2=&-m(\log x)\log\log x-M\log x+m\log x+m(\log 2)\log\log 2-\notag\\&-m\log 2+M\log 2, \label{M1C}\\
S_3=&-\int_2^\infty\frac{\Delta_{2F}(u)}{u}du+O(e^{-C_F'\sqrt{\log x}}). \label{M1D}
\end{align}
Combining (\ref{M1A})--(\ref{M1D}), we complete the proof of  Proposition \ref{M1M}.
\end{proof}

Finally we shall show Theorem \ref{MTH2} from Proposition \ref{M1M}. The left hand side of Theorem \ref{MTH2} is written as follows:
\begin{align}
\sum_{n\leq x}b_F(n)\log n=\sum_{p\leq x}b_F(p)\log p+\sum_{p^r\leq x, \ r\geq 2}b_F(p^r)\log p^r. \label{MPA}
\end{align}
The second term on right-hand side of (\ref{MPA}) is estimated as
\begin{align}
\sum_{p^r\leq x, \ r\geq 2}b_F(p^r)\log p^r\ll\sum_{p\leq\sqrt{x}}\sum_{r\leq\frac{\log x}{\log p}}\log p^r\ll\sqrt{x}(\log x)^2. \label{MPB}
\end{align}
Applying partial summation to the first term of the right-hand of (\ref{MPA}), we have
\begin{align}
\sum_{p\leq x}b_F(p)\log p
&=x\sum_{p\leq x}\frac{b_F(p)\log p}{p}-\int_2^x\sum_{p\leq u}\frac{b_F(p)\log p}{p}du \notag\\
&=x\sum_{p\leq x}\frac{b_F(p)\log p}{p}-\int_2^x(m\log u+M_1)du-\int_2^x\Delta_{1F}(u)du\notag\\
&=:T_1+T_2+T_3 \label{MPC}
\end{align}
where$\Delta_{1F}(u)$ is given by
\begin{align*}
\Delta_{1F}(u)=\sum_{p\leq u}\frac{b_F(p)\log p}{p}-m\log u-M_1.
\end{align*}
Proposition \ref{M1M} gives that $\Delta_{1F}(u)=O(ue^{-C_F'\sqrt{\log u}})$ and 
\begin{align}
&T_1=mx\log x+M_1x+O(xe^{-C'_F\sqrt{\log x}}),\label{MPD}\\
&T_2=-mx\log x+mx-M_1x+2(m\log2+M_1),\\
&T_3\ll\left(\int_2^{\sqrt{x}}+\int_{\sqrt{x}}^x\right)e^{-C_F'\sqrt{\log u}}du
\ll \sqrt{x}+x e^{-C_F'\sqrt{\log\sqrt{x}}}\ll xe^{-C_F''\sqrt{\log u}}. \label{MPE}
\end{align} 
Therefore from (\ref{MPA})--(\ref{MPE}), the proof of Theorem \ref{MTH2} is completed.


\bibliographystyle{alpha}

\end{document}